\providecommand{\U}[1]{\protect\rule{.1in}{.1in}}
\newtheorem{theorem}{Theorem}
\newtheorem{acknowledgement}[theorem]{Acknowledgement}
\newtheorem{corollary}[theorem]{Corollary}
\newtheorem{proposition}[theorem]{Proposition}
\newtheorem{remark}[theorem]{Remark}
\newenvironment{proof}[1][Proof]{\noindent\textbf{#1.} }{\ \rule{0.5em}{0.5em}}
\begin{document}

\author{Diego Dominici \thanks{e-mail: dominicd@newpaltz.edu}\\Department of Mathematics\\State University of New York at New Paltz\\1 Hawk Dr.\\New Paltz, NY 12561-2443\\USA\\Phone: (845) 257-2607\\Fax: (845) 257-3571 }
\title{Asymptotic analysis of nested derivatives}
\date{}
\maketitle

\begin{abstract}
We analyze the nested derivatives of a function $\mathfrak{D}^{n}[f]\,(x)$
asymptotically, as $n\rightarrow\infty,$ using a discrete version of the ray
method. We give some examples showing the accuracy of our formulas.

\end{abstract}

Keywords: Inverse error function, asymptotic analysis, discrete ray method,
differential-difference equations, Taylor series.

MSC-class: 33B20 (Primary) 30B10, 34K25 (Secondary)

\section{Introduction}

\strut The oldest and most widely used method for computing the Taylor series
of inverse functions is the Lagrange Inversion theorem \cite{LIT}, which can
be stated as follows \cite{MR1424469}:

\begin{theorem}
Suppose that $f(z)$ is analytic at $a,$ and $f^{\prime}(a)\neq0.$ Then,
\[
f^{-1}\left(  z\right)  =a+%
{\displaystyle\sum\limits_{n=1}^{\infty}}
c_{n}\left(  z-b\right)  ^{n},
\]
on a neighborhood of $b=f(a)$, where%
\[
c_{n}=\frac{1}{n!}\frac{d^{n-1}}{dw^{n-1}}\left[  \frac{w-a}{f(w)-b}\right]
^{n}.
\]

\end{theorem}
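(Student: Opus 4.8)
The plan is to prove the Lagrange Inversion Theorem using Cauchy's integral formula combined with a contour integral representation of the inverse function's Taylor coefficients. Since $f$ is analytic at $a$ with $f'(a) \neq 0$, the function $f$ is locally biholomorphic near $a$; hence $f^{-1}$ exists and is analytic on a neighborhood of $b = f(a)$, guaranteeing the Taylor expansion $f^{-1}(z) = a + \sum_{n=1}^{\infty} c_n (z-b)^n$ exists with $c_n = \frac{1}{n!}(f^{-1})^{(n)}(b)$. The task reduces to extracting a closed form for these coefficients.

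First I would write the Taylor coefficient as a Cauchy integral over a small positively-oriented circle $\Gamma$ around $b$ in the $z$-plane:
\[
c_n = \frac{1}{2\pi i}\oint_{\Gamma}\frac{f^{-1}(z)-a}{(z-b)^{n+1}}\,dz.
\]
The key move is the change of variables $z = f(w)$, so that $f^{-1}(z) = w$ and $dz = f'(w)\,dw$; as $z$ traverses $\Gamma$, the variable $w$ traverses a small contour $\gamma$ around $a$ in the $w$-plane. This transforms the integral into
\[
c_n = \frac{1}{2\pi i}\oint_{\gamma}\frac{(w-a)f'(w)}{(f(w)-b)^{n+1}}\,dw.
\]

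Next I would integrate by parts to remove the factor $f'(w)$ and symmetrize the exponent. Writing the integrand's $f'(w)(f(w)-b)^{-(n+1)}$ piece as the derivative $-\frac{1}{n}\frac{d}{dw}(f(w)-b)^{-n}$, integration by parts (the boundary term vanishes since the contour is closed) yields
\[
c_n = \frac{1}{2\pi i}\,\frac{1}{n}\oint_{\gamma}\frac{dw}{(f(w)-b)^{n}}
= \frac{1}{2\pi i}\,\frac{1}{n}\oint_{\gamma}\frac{(w-a)^{n}}{(f(w)-b)^{n}}\,\frac{dw}{(w-a)^{n}}.
\]
Recognizing the right-hand contour integral as a Cauchy formula for the $(n-1)$-st derivative of the analytic function $\left[\frac{w-a}{f(w)-b}\right]^{n}$ evaluated at $w=a$, namely
\[
\frac{1}{2\pi i}\oint_{\gamma}\frac{g(w)}{(w-a)^{n}}\,dw = \frac{1}{(n-1)!}\,\frac{d^{n-1}}{dw^{n-1}}\,g(w)\Big|_{w=a},
\]
with $g(w) = \left[\frac{w-a}{f(w)-b}\right]^{n}$, and combining the factor $\frac{1}{n}\cdot\frac{1}{(n-1)!} = \frac{1}{n!}$, gives exactly the claimed formula.

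The main obstacle is justifying that $g(w) = \left[\frac{w-a}{f(w)-b}\right]^{n}$ is genuinely analytic at $w=a$ rather than having a singularity there. The apparent difficulty is that $f(w)-b$ vanishes at $w=a$; the resolution is that since $f(a)=b$ and $f'(a)\neq 0$, the quotient $\frac{w-a}{f(w)-b}$ has a removable singularity at $a$ with value $1/f'(a)\neq 0$, so $g$ extends analytically across $a$ and the residue computation via the $(n-1)$-st derivative is valid. I would also need to confirm that the contour $\gamma$ can be chosen small enough that $f$ is injective on it and $f(w)\neq b$ for $w\neq a$ inside it, which follows from the local biholomorphism; these are the points where care is required, but they are standard consequences of $f'(a)\neq 0$.
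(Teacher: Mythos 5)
Your proof is correct: the chain Cauchy formula $\to$ substitution $z=f(w)$ $\to$ integration by parts $\to$ Cauchy's differentiation formula applied to the (removable-singularity-extended) function $\left[\frac{w-a}{f(w)-b}\right]^{n}$ at $w=a$ is the standard contour-integral derivation of Lagrange inversion, and you correctly flag the two points needing care (analyticity of $g$ at $a$ and injectivity of $f$ on a small enough disk). Note that the paper itself offers no proof of this theorem -- it is quoted as a classical result from Whittaker and Watson -- so there is nothing internal to compare against; your argument is essentially the textbook proof that the cited reference supplies.
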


Modifications and extensions of this formula were studied by Apostol
\cite{MR1790920}, Gessel \cite{MR894817}, Krattenthaler \cite{MR924765}, Roman
and Rota \cite{MR0485417} and Sokal \cite{MR2529395} among others.

Formulas relating the coefficients of the Taylor series of a function and its
inverse were obtained by Jacobsthal \cite{MR0032014}, Rauch \cite{MR0041211}
and Ostrowski \cite{MR0083599}. The two-variable case was considered in
\cite{MR1372861}.

In \cite{MR2031140}, we derived an algorithmic approach that simplifies the
computation of the derivatives of inverse functions. We defined $\mathfrak{D}%
^{n}[f]$\thinspace$(x),$ \textit{the n}$^{th}$\textit{ nested derivative} of
the function $f(x),$ by $\mathfrak{D}^{0}[f]\,(x)=1$ and%
\begin{equation}
\mathfrak{D}^{n+1}[f]\,(x)=\frac{d}{dx}\left[  f(x)\mathfrak{D}^{n}%
[f]\,(x)\right]  ,\quad n=0,1,\ldots. \label{nested}%
\end{equation}
Using these, we proved the result:

\begin{theorem}
Let $h(x)$ be analytic at $x_{0},$ and
\[
f(x)=\frac{1}{h^{\prime}(x)},\quad\left\vert f(x_{0})\right\vert \in\left(
0,\infty\right)  .
\]
$\ $ \ Then,%
\[
h^{-1}(z)=x_{0}+f(x_{0})\sum\limits_{n=1}^{\infty}\mathfrak{D}^{n-1}%
[f]\,(x_{0})\frac{(z-z_{0})^{n}}{n!},
\]
on a neighborhood of $z_{0}=h(x_{0}).$
\end{theorem}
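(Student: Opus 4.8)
The plan is to recognize that the inverse function $g = h^{-1}$ satisfies a simple first-order autonomous differential equation, and then to show by induction that its successive derivatives are precisely the nested derivatives of $f$ composed with $g$. First I would set $g = h^{-1}$ and observe that since $h$ is analytic at $x_0$ with $h'(x_0) = 1/f(x_0) \neq 0$ (guaranteed by $|f(x_0)| \in (0,\infty)$), the analytic inverse function theorem ensures that $g$ is analytic in a neighborhood of $z_0 = h(x_0)$, with $g(z_0) = x_0$. Hence $g$ admits a convergent Taylor expansion $g(z) = \sum_{n=0}^{\infty} g^{(n)}(z_0)(z-z_0)^n/n!$ there, and the task reduces to identifying its coefficients.

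Next I would differentiate the defining identity $h(g(z)) = z$ to obtain $h'(g(z))\, g'(z) = 1$, whence $g'(z) = 1/h'(g(z)) = f(g(z))$. This is the key structural fact: $g$ solves the autonomous equation $g' = f \circ g$.

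The heart of the argument is the claim that $g^{(n)}(z) = f(g(z))\, \mathfrak{D}^{n-1}[f](g(z))$ for every $n \geq 1$, which I would prove by induction on $n$. The base case $n = 1$ is exactly $g'(z) = f(g(z)) \cdot 1 = f(g(z))\, \mathfrak{D}^{0}[f](g(z))$. For the inductive step, I would write $F(x) = f(x)\, \mathfrak{D}^{n-1}[f](x)$ so that $g^{(n)}(z) = F(g(z))$; the chain rule then gives $g^{(n+1)}(z) = F'(g(z))\, g'(z) = F'(g(z))\, f(g(z))$, and by the defining recursion (\ref{nested}) one has $F'(x) = \frac{d}{dx}\left[ f(x)\mathfrak{D}^{n-1}[f](x) \right] = \mathfrak{D}^{n}[f](x)$, which closes the induction.

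Finally, evaluating at $z = z_0$ and using $g(z_0) = x_0$ yields $g^{(n)}(z_0) = f(x_0)\, \mathfrak{D}^{n-1}[f](x_0)$ for $n \geq 1$; substituting these into the Taylor series of $g$ and factoring out $f(x_0)$ produces the claimed formula. I expect the main subtlety to lie not in the algebra but in the bookkeeping of the inductive step: one must resist differentiating $f(g(z))$ and $\mathfrak{D}^{n-1}[f](g(z))$ separately, and instead package them as the single composition $F \circ g$ so that the nested-derivative recursion (\ref{nested}) applies verbatim. The convergence claim itself is immediate from the analytic inverse function theorem, so no independent estimate on the growth of $\mathfrak{D}^{n}[f](x_0)$ is required.
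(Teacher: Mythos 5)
Your proof is correct: the identity $g' = f\circ g$ for $g=h^{-1}$, together with the induction showing $g^{(n)} = \bigl(f\cdot\mathfrak{D}^{n-1}[f]\bigr)\circ g$ via the recursion (\ref{nested}), and the analytic inverse function theorem for convergence, gives a complete argument. The present paper does not prove this theorem (it only cites \cite{MR2031140}), but your route is essentially the standard one used there, so there is nothing to flag.
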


In this paper, we study the asymptotic behavior of the nested derivatives
$\mathfrak{D}^{n}[f]\,(x)$ for large $n,$ using a discrete version of the ray
method \cite{MR2364955}, \cite{MR2401156}, \cite{MR2583007}. As a consequence,
we obtain asymptotic approximations for the higher-order derivatives of
inverse functions.

\section{Nested derivatives}

Since (\ref{nested}) is difficult to study asymptotically, we obtained in
\cite{MR2427672} a linear relation between successive nested derivatives.

\begin{proposition}
Let
\begin{equation}
g_{n}\left(  x\right)  =\frac{\mathfrak{D}^{n}[f]\,(x)}{\left[  f\left(
x\right)  \right]  ^{n}}. \label{gn}%
\end{equation}
Then, $g_{0}\left(  x\right)  =1$ and%
\begin{equation}
g_{n+1}=g_{n}^{\prime}+\left(  n+1\right)  \omega(x)g_{n},\quad n=0,1,\ldots,
\label{ddnested}%
\end{equation}
where
\begin{equation}
\omega(x)=\frac{f^{\prime}(x)}{f(x)}. \label{omega}%
\end{equation}

\end{proposition}

As a result, we obtain the following corollary.

\begin{corollary}
Let
\[
H(x)=h^{-1}(x),\quad f(x)=\frac{1}{h^{\prime}(x)},\quad z_{0}=h(x_{0}),\text{
\ }\ \left\vert f(x_{0})\right\vert \in\left(  0,\infty\right)  .
\]
Then,$\ $
\begin{equation}
\frac{d^{n}H}{dz^{n}}(z_{0})=\left[  f(x_{0})\right]  ^{n}g_{n-1}(x_{0}),\quad
n=1,2,\ldots. \label{deriv}%
\end{equation}

\end{corollary}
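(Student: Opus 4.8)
The plan is to combine the two earlier results—Theorem 2 (the nested-derivative series for $h^{-1}$) and the definition (\ref{gn}) of $g_n$—by matching Taylor coefficients. First I would recall that Theorem 2 gives, on a neighborhood of $z_0=h(x_0)$,
\[
H(z)=h^{-1}(z)=x_0+f(x_0)\sum_{n=1}^{\infty}\mathfrak{D}^{n-1}[f]\,(x_0)\frac{(z-z_0)^{n}}{n!}.
\]
Since $H$ is analytic at $z_0$ (being the local inverse of $h$, with $h'(x_0)=1/f(x_0)\neq0$ finite and nonzero by the hypothesis $|f(x_0)|\in(0,\infty)$), it has a convergent Taylor expansion $H(z)=\sum_{n\ge0}\frac{1}{n!}\frac{d^nH}{dz^n}(z_0)(z-z_0)^n$.

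The key step is to equate the coefficient of $(z-z_0)^n$ in these two expansions. For $n\ge1$ this yields
\[
\frac{1}{n!}\frac{d^{n}H}{dz^{n}}(z_0)=f(x_0)\,\mathfrak{D}^{n-1}[f]\,(x_0)\frac{1}{n!},
\]
so that $\frac{d^{n}H}{dz^{n}}(z_0)=f(x_0)\,\mathfrak{D}^{n-1}[f]\,(x_0)$. Next I would substitute the definition (\ref{gn}), which rearranges to $\mathfrak{D}^{n-1}[f]\,(x)=\left[f(x)\right]^{n-1}g_{n-1}(x)$. Evaluating at $x=x_0$ and inserting gives
\[
\frac{d^{n}H}{dz^{n}}(z_0)=f(x_0)\left[f(x_0)\right]^{n-1}g_{n-1}(x_0)=\left[f(x_0)\right]^{n}g_{n-1}(x_0),
\]
which is exactly (\ref{deriv}).

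I do not expect a serious obstacle here: this is essentially a bookkeeping argument that threads together Theorem 2 and the normalization (\ref{gn}). The only point demanding any care is justifying that the two power series may be compared coefficient-by-coefficient, which follows from the uniqueness of Taylor coefficients of an analytic function on the common neighborhood where both expansions are valid. One should also confirm the index alignment—namely that the $(z-z_0)^n$ term in Theorem 2 pairs with the $n$th derivative of $H$ and with $g_{n-1}$—since the nested-derivative series is indexed by $\mathfrak{D}^{n-1}$ while the Taylor series is indexed by the $n$th derivative; getting this shift right is the whole content of the verification.
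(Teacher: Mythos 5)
Your proposal is correct and matches the paper's (implicit) argument: the corollary is obtained exactly by reading off the Taylor coefficients of $H$ from Theorem 2 and substituting $\mathfrak{D}^{n-1}[f]\,(x_0)=\left[  f(x_0)\right]  ^{n-1}g_{n-1}(x_0)$ from the definition (\ref{gn}). The index bookkeeping you flag is the only content, and you have it right.
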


Later on, we will need to know the behavior of $g_{n}\left(  x\right)  $ for a
fixed value of $n$ and large $x$.

\begin{proposition}
Suppose that
\begin{equation}
\omega(x)\sim ax^{p},\quad x\rightarrow\infty.\label{w large}%
\end{equation}
Then, for fixed $n,$ we have%
\begin{equation}
g_{n}(x)\sim\left\{
\begin{array}
[c]{c}%
\left(  -1\right)  ^{n}\frac{a}{p+1}\left(  -p-1\right)  _{n}\ x^{p-n+1},\quad
p<-1\\
\left(  a-1\right)  ^{n}\left(  \frac{a}{a-1}\right)  _{n}\ x^{-n},\quad
p=-1\\
n!a^{n}x^{pn},\quad p>-1
\end{array}
\right.  \label{xlarge}%
\end{equation}
as $x\rightarrow\infty,$ where $\left(  p\right)  _{n}$ denotes the Pochhammer
symbol defined by $\left(  p\right)  _{0}=1$ and%
\[
\left(  p\right)  _{n}=%
{\displaystyle\prod\limits_{j=0}^{n-1}}
\left(  p+j\right)  ,\quad n=1,2,\ldots.
\]

\end{proposition}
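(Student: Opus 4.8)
The plan is to prove the three asymptotic regimes by induction on $n$, using the recurrence $g_{n+1}=g_{n}'+(n+1)\omega(x)g_{n}$ from (\ref{ddnested}) together with the assumption $\omega(x)\sim ax^{p}$. The base case $g_{0}=1$ is immediate in each regime, so the work is in the inductive step: assuming $g_{n}(x)$ has the claimed leading-order form, I substitute into the recurrence and determine the leading behavior of $g_{n+1}$. The key observation in each case is a competition between the two terms on the right-hand side: the derivative term $g_{n}'$ lowers the power of $x$ by one, while the multiplicative term $(n+1)\omega(x)g_{n}$ shifts the power by $p$. Whichever produces the larger exponent dominates, and the dividing line is exactly $p=-1$, which is why the statement splits into three cases.

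I would begin with the case $p>-1$. Here, if $g_{n}\sim n!\,a^{n}x^{pn}$, then $g_{n}'\sim n!\,a^{n}\,pn\,x^{pn-1}$ carries exponent $pn-1$, while $(n+1)\omega g_{n}\sim (n+1)a\,x^{p}\cdot n!\,a^{n}x^{pn}=(n+1)!\,a^{n+1}x^{p(n+1)}$ carries exponent $p(n+1)=pn+p$. Since $p>-1$ gives $p>-1$, i.e. $pn+p>pn-1$, the multiplicative term dominates, and its coefficient is exactly $(n+1)!\,a^{n+1}$, matching the claimed form with $n$ replaced by $n+1$. The derivative term is genuinely lower order and can be discarded. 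This case is the cleanest and fixes the inductive pattern.

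For $p<-1$ the roles reverse: now $pn-1>pn+p$ (since $p+1<0$), so the derivative term $g_{n}'$ dominates. With $g_{n}\sim(-1)^{n}\frac{a}{p+1}(-p-1)_{n}\,x^{p-n+1}$, differentiation multiplies by the exponent $(p-n+1)$ and drops the power by one; the resulting coefficient $(-1)^{n}\frac{a}{p+1}(-p-1)_{n}(p-n+1)$ should be rearranged, using $(p-n+1)=-(-p-1+n)=-((-p-1)+n)$, to produce the factor $(-1)^{n+1}\frac{a}{p+1}(-p-1)_{n+1}$ via the Pochhammer recursion $(-p-1)_{n+1}=(-p-1)_{n}\big((-p-1)+n\big)$. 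The new exponent $p-n$ equals $p-(n+1)+1$, as required. I must also verify that the subdominant term $(n+1)\omega g_{n}$ does not interfere, which follows from the strict inequality $p<-1$.

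The case $p=-1$ is the main obstacle, because the two terms now carry the \emph{same} exponent and must be added rather than compared. Here $g_{n}'$ and $(n+1)\omega g_{n}$ both contribute at order $x^{-n-1}$, so I must track both coefficients carefully. With $g_{n}\sim(a-1)^{n}\big(\frac{a}{a-1}\big)_{n}x^{-n}$ and $\omega\sim a x^{-1}$, the derivative gives coefficient $-n\,(a-1)^{n}\big(\frac{a}{a-1}\big)_{n}$ while the multiplicative term gives $(n+1)a\,(a-1)^{n}\big(\frac{a}{a-1}\big)_{n}$; their sum must be massaged into the form $(a-1)^{n+1}\big(\frac{a}{a-1}\big)_{n+1}$. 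The critical algebraic identity to check is that $(n+1)a-n$ factors as $(a-1)\big(\frac{a}{a-1}+n\big)$, which then feeds the Pochhammer recursion $\big(\frac{a}{a-1}\big)_{n+1}=\big(\frac{a}{a-1}\big)_{n}\big(\frac{a}{a-1}+n\big)$ and supplies the extra factor of $(a-1)$. Verifying this cancellation, and confirming that $a\neq1$ so the expressions are well defined, is where the delicacy lies; assuming $a\ne 1$ the identity holds and the induction closes.
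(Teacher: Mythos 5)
Your induction is essentially the paper's own argument in different clothing: the paper posits $g_{n}(x)\sim c_{n}x^{r_{n}}$, substitutes into (\ref{ddnested}), and reads off recursions for $r_{n}$ and $c_{n}$ by comparing the exponents $r_{n}-1$ and $r_{n}+p$ of the two terms, which is exactly your dominant-term comparison; all three of your coefficient computations, including the factorization $(n+1)a-n=(a-1)\bigl(\tfrac{a}{a-1}+n\bigr)$ in the critical case $p=-1$, check out. The one slip is your claim that the base case $g_{0}=1$ is ``immediate in each regime'': for $p<-1$ the formula at $n=0$ reads $\tfrac{a}{p+1}x^{p+1}\not\sim 1$, and the inductive step from $n=0$ also fails there since $g_{0}'=0$ means the multiplicative term, not the derivative term, produces $g_{1}$. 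The fix is to anchor the induction at $n=1$ with $g_{1}=\omega\sim ax^{p}$, which all three formulas reproduce, and this is precisely where the paper starts (taking $c_{1}=a$, $r_{1}=p$).
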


\begin{proof}
From (\ref{w large}), it follows that
\begin{equation}
g_{n}(x)\sim c_{n}x^{r_{n}},\quad x\rightarrow\infty,\label{glarge}%
\end{equation}
for some sequences $c_{n},$ $r_{n}.$ Since $g_{1}(x)=\omega(x),$ we have%
\[
c_{1}=a,\quad r_{1}=p.
\]
Using (\ref{glarge}) in (\ref{ddnested}), we get%
\begin{equation}
c_{n+1}x^{r_{n+1}}\sim c_{n}r_{n}x^{r_{n}-1}+\left(  n+1\right)
c_{n}ax^{r_{n}+p},\quad x\rightarrow\infty.\label{glarge1}%
\end{equation}
Comparing powers of $x$ in (\ref{glarge1}), we obtain%
\[
r_{n+1}=\left\{
\begin{array}
[c]{c}%
r_{n}-1,\quad p<-1\\
r_{n}+p,\quad p>-1
\end{array}
\right.  .
\]

Let's consider these cases one at the time.

\begin{enumerate}
\item $p<-1$

Solving
\[
r_{n+1}=r_{n}-1,\quad r_{1}=p,
\]
we obtain
\begin{equation}
r_{n}=p+1-n. \label{rn1}%
\end{equation}
Using (\ref{rn1}) in (\ref{glarge1}), we must have%
\[
c_{n+1}=\left(  p+1-n\right)  c_{n},\quad c_{1}=a,
\]
and therefore%
\[
c_{n}=a\left(  p+1\right)  ^{-1}\left(  -1\right)  ^{n}\left(  -p-1\right)
_{n}.
\]

\item $p>-1$

Solving
\[
r_{n+1}=r_{n}+p,\quad r_{1}=p,
\]
we obtain%
\begin{equation}
r_{n}=pn. \label{rn2}%
\end{equation}
Using (\ref{rn2}) in (\ref{glarge1}), we need%
\[
c_{n+1}=\left(  n+1\right)  ac_{n},\quad c_{1}=a,
\]
and hence%
\[
c_{n}=n!a^{n}.
\]

\item $p=-1$

We have $r_{n}=-n$ and
\[
c_{n+1}=\left[  -n+\left(  n+1\right)  a\right]  c_{n},\quad c_{1}=a,
\]
which gives
\[
c_{n}=\left(  a-1\right)  ^{n}\left(  \frac{a}{a-1}\right)  _{n}.
\]

\end{enumerate}
\end{proof}

\begin{remark}
Since%
\begin{align*}
\left(  a-1\right)  ^{n}\left(  \frac{a}{a-1}\right)  _{n} &  =%
{\displaystyle\prod\limits_{j=0}^{n-1}}
\left[  \left(  a-1\right)  \left(  \frac{a}{a-1}+j\right)  \right]  \\
&  =%
{\displaystyle\prod\limits_{j=0}^{n-1}}
\left[  a+\left(  a-1\right)  j\right]  ,
\end{align*}
we have%
\[
\left(  a-1\right)  ^{n}\left(  \frac{a}{a-1}\right)  _{n}\rightarrow
1,\quad\text{as }a\rightarrow1.
\]
Hence, when $p=-1$ and $a=1,$ we see that%
\begin{equation}
g_{n}(x)\sim x^{-n}.\label{a=1}%
\end{equation}

\end{remark}

In \cite{Knessl} we analyzed the family of polynomials generated by
(\ref{ddnested}) with $\omega(x)=x.$

\section{Asymptotic analysis of $g_{n}(x)$}

We seek an approximate solution for (\ref{ddnested}) of the form
\begin{equation}
G_{n}(x)\sim\kappa\exp\left[  F(x,n)+G(x,n)\right]  ,\quad n\rightarrow
\infty\label{anszat}%
\end{equation}
where $\kappa$ is a constant and
\[
G=o(F),\quad n\rightarrow\infty.
\]
Since $G_{0}(x)=1,$ we require that%
\begin{equation}
F(x,0)=0\text{ \ \ and \ \ }G(x,0)=0.\label{initial}%
\end{equation}
Using (\ref{anszat}) in (\ref{ddnested}), we have%
\begin{gather}
\exp\left(  F+\frac{\partial F}{\partial n}+\frac{1}{2}\frac{\partial^{2}%
F}{\partial n^{2}}+G+\frac{\partial G}{\partial n}\right)  \label{asymp1}\\
=\left(  \frac{\partial F}{\partial x}+\frac{\partial G}{\partial x}\right)
\exp\left(  F+G\right)  +\left(  n+1\right)  \omega(x)\exp\left(  F+G\right)
,\nonumber
\end{gather}
where we have used%
\[
F(x,n+1)=F(x,n)+\frac{\partial F}{\partial n}(x,n)+\frac{1}{2}\frac
{\partial^{2}F}{\partial n^{2}}(x,n)+\cdots.
\]

From (\ref{asymp1}) we obtain, to leading order, the \textit{eikonal} equation%
\begin{equation}
\frac{\partial F}{\partial x}+\left(  n+1\right)  \omega(x)-\exp\left(
\frac{\partial F}{\partial n}\right)  =0, \label{eikonal}%
\end{equation}
and
\[
\exp\left(  \frac{1}{2}\frac{\partial^{2}F}{\partial n^{2}}+\frac{\partial
G}{\partial n}\right)  -\frac{\partial G}{\partial x}\exp\left(
-\frac{\partial F}{\partial n}\right)  -1=0,
\]
or, to leading order, the \textit{transport }equation%
\begin{equation}
\frac{1}{2}\frac{\partial^{2}F}{\partial n^{2}}+\frac{\partial G}{\partial
n}-\frac{\partial G}{\partial x}\exp\left(  -\frac{\partial F}{\partial
n}\right)  =0. \label{transport}%
\end{equation}

\subsection{The rays}

To solve (\ref{eikonal}), we use the method of characteristics, which we
briefly review. Given the first order partial differential equation%
\[
\mathfrak{F}\left(  x,n,F,p,q\right)  =0,\text{ \ \ with \ \ }\ p=\frac
{\partial F}{\partial x},\quad q=\frac{\partial F}{\partial n},
\]
we search for a solution \ $F(x,n)$ by solving the system of \textquotedblleft
characteristic equations\textquotedblright\
\begin{align*}
\frac{dx}{dt} &  =\frac{\partial\mathfrak{F}}{\partial p},\quad\frac{dn}%
{dt}=\frac{\partial\mathfrak{F}}{\partial q},\\
\frac{dp}{dt} &  =-\frac{\partial\mathfrak{F}}{\partial x}-p\frac
{\partial\mathfrak{F}}{\partial F},\quad\frac{dq}{dt}=-\frac{\partial
\mathfrak{F}}{\partial n}-q\frac{\partial\mathfrak{F}}{\partial F},\\
\frac{dF}{dt} &  =p\frac{\partial\mathfrak{F}}{\partial p}+q\frac
{\partial\mathfrak{F}}{\partial q},
\end{align*}
with initial conditions%
\begin{equation}
\mathfrak{F}\left[  x(0,s),n(0,s),F(0,s),p(0,s),q(0,s)\right]
=0,\label{initial1}%
\end{equation}
and%
\begin{equation}
\quad\frac{d}{ds}F(0,s)=p(0,s)\frac{d}{ds}x(0,s)+q(0,s)\frac{d}{ds}%
n(0,s),\label{initial2}%
\end{equation}
where we now consider $\left\{  x,n,F,p,q\right\}  $ to all be functions of
the variables $\left(  t,s\right)  .$

For the eikonal equation (\ref{eikonal}), we have%
\begin{equation}
\mathfrak{F}\left(  x,n,F,p,q\right)  =p-e^{q}+\left(  n+1\right)  \omega(x)
\label{eikonal1}%
\end{equation}
and therefore the characteristic equations are%
\begin{equation}
\frac{dx}{dt}=1,\quad\frac{dn}{dt}=-e^{q},\quad\frac{dp}{dt}=-\left(
n+1\right)  \omega^{\prime}(x),\quad\frac{dq}{dt}=-\omega(x), \label{charac1}%
\end{equation}
and%
\begin{equation}
\frac{dF}{dt}=p-qe^{q}. \label{eqf}%
\end{equation}
Solving (\ref{charac1}) subject to the initial conditions%
\[
x(0,s)=s,\quad n(0,s)=0,\quad q\left(  0,s\right)  =A(s),\quad p(0,s)=B(s)
\]
with $A(s),B(s)$ to be determined, we obtain%
\begin{gather*}
x(t,s)=t+s,\quad n(t,s)=\exp\left[  A(s)\right]  f(s)\left[
h(s)-h(t+s)\right]  ,\\
p(t,s)=\exp\left[  A(s)\right]  \left[  \frac{f(s)}{f(t+s)}-1\right]
+\omega(s)-(n+1)\omega(t+s)+B(s),\\
q(t,s)=\ln\left[  \frac{f(s)}{f(t+s)}\right]  +A(s).
\end{gather*}
From (\ref{initial1}) we have $B-e^{A}+\omega(s)=0$ and therefore
\[
B=e^{A}-\omega(s).
\]
Thus,%
\begin{gather}
x(t,s)=t+s,\quad n(t,s)=\exp\left[  A(s)\right]  f(s)\left[
h(s)-h(t+s)\right]  ,\nonumber\\
p(t,s)=\exp\left[  A(s)\right]  \frac{f(s)}{f(t+s)}-(n+1)\omega
(t+s),\label{char1}\\
q(t,s)=\ln\left[  \frac{f(s)}{f(t+s)}\right]  +A(s).\nonumber
\end{gather}

Since (\ref{initial}) implies that
\begin{equation}
F(0,s)=0, \label{f(0,s)}%
\end{equation}
we have from (\ref{initial2}) and (\ref{char1})%
\[
\left[  e^{A}-\omega(s)\right]  \times1+A\times0=0.
\]
Hence, $A(s)=\ln\left[  \omega(s)\right]  $ and therefore%
\begin{equation}
x=t+s,\quad n=f^{\prime}(s)\left[  h(s)-h(t+s)\right]  , \label{rays}%
\end{equation}%
\begin{equation}
p=\frac{f^{\prime}(s)}{f(t+s)}-(n+1)\omega(t+s),\quad q=\ln\left[
\frac{f^{\prime}(s)}{f(t+s)}\right]  . \label{pq}%
\end{equation}

\subsection{The functions $F$ and $G$}

Using (\ref{pq}) in (\ref{eqf}) we have%
\begin{equation}
\frac{dF}{dt}=\frac{f^{\prime}(s)}{f(t+s)}-(n+1)\omega(t+s)-\ln\left[
\frac{f^{\prime}(s)}{f(t+s)}\right]  \frac{f^{\prime}(s)}{f(t+s)}.
\label{eqf1}%
\end{equation}
Solving (\ref{eqf1}) subject to (\ref{f(0,s)}), we obtain%
\begin{equation}
F(t,s)=\ln\left[  \frac{f(s)}{f(t+s)}\right]  -n-n\ln\left[  \frac
{f(t+s)}{f^{\prime}(s)}\right]  \label{f(t,s)}%
\end{equation}
or, using (\ref{rays}),%
\begin{equation}
F=\ln\left[  \frac{f(s)}{f(x)}\right]  -n-n\ln\left[  \frac{f(x)}{f^{\prime
}(s)}\right]  . \label{f(x,n,s)}%
\end{equation}

To solve the transport equation (\ref{transport}), we need to compute
$\frac{\partial^{2}F}{\partial n^{2}},\frac{\partial G}{\partial n}$ and
$\frac{\partial G}{\partial x}$ as functions of $t,s.$ Use of the chain rule
gives%
\[%
\begin{bmatrix}
\frac{\partial x}{\partial t} & \frac{\partial x}{\partial s}\\
\frac{\partial n}{\partial t} & \frac{\partial n}{\partial s}%
\end{bmatrix}%
\begin{bmatrix}
\frac{\partial t}{\partial x} & \frac{\partial t}{\partial n}\\
\frac{\partial s}{\partial x} & \frac{\partial s}{\partial n}%
\end{bmatrix}
=%
\begin{bmatrix}
1 & 0\\
0 & 1
\end{bmatrix}
\]
and hence,%
\begin{equation}%
\begin{bmatrix}
\frac{\partial t}{\partial x} & \frac{\partial t}{\partial n}\\
\frac{\partial s}{\partial x} & \frac{\partial s}{\partial n}%
\end{bmatrix}
=\frac{1}{J(t,s)}%
\begin{bmatrix}
\frac{\partial n}{\partial s} & -\frac{\partial x}{\partial s}\\
-\frac{\partial n}{\partial t} & \frac{\partial x}{\partial t}%
\end{bmatrix}
, \label{inversion}%
\end{equation}
where the Jacobian $J(t,s)$ is defined by
\begin{equation}
J\left(  t,s\right)  =\frac{\partial x}{\partial t}\frac{\partial n}{\partial
s}-\frac{\partial x}{\partial s}\frac{\partial n}{\partial t}=\frac{\partial
n}{\partial s}-\frac{\partial n}{\partial t}. \label{J}%
\end{equation}
Using (\ref{rays}), we find that%
\begin{equation}
J(t,s)=n\frac{f^{\prime\prime}(s)}{f^{\prime}(s)}+\omega(s). \label{J1}%
\end{equation}

Using $q=\frac{\partial F}{\partial n}$ in (\ref{transport}), we have%
\[
\frac{1}{2}\frac{\partial q}{\partial n}+\frac{\partial G}{\partial n}%
-\frac{\partial G}{\partial x}e^{-q}=0
\]
or%
\[
\frac{\partial}{\partial n}\left(  \frac{1}{2}e^{q}\right)  =\frac{\partial
G}{\partial x}-\frac{\partial G}{\partial n}e^{q}%
\]
and using (\ref{charac1}), we obtain%
\[
\frac{\partial}{\partial n}\left(  \frac{1}{2}e^{q}\right)  =\frac{\partial
G}{\partial x}\frac{\partial x}{\partial t}+\frac{\partial G}{\partial n}%
\frac{\partial n}{\partial t}=\frac{\partial G}{\partial t}.
\]
Since $-e^{q}=\frac{\partial n}{\partial t},$ we have%
\begin{gather*}
\frac{\partial}{\partial n}\left(  \frac{1}{2}e^{q}\right)  =-\frac{1}{2}%
\frac{\partial}{\partial n}\left(  \frac{\partial n}{\partial t}\right)
=-\frac{1}{2}\left(  \frac{\partial^{2}n}{\partial t^{2}}\frac{\partial
t}{\partial n}+\frac{\partial^{2}n}{\partial t\partial s}\frac{\partial
s}{\partial n}\right)  \\
=-\frac{1}{2J}\left(  -\frac{\partial^{2}n}{\partial t^{2}}\frac{\partial
x}{\partial s}+\frac{\partial^{2}n}{\partial t\partial s}\frac{\partial
x}{\partial t}\right)  =-\frac{1}{2J}\left(  -\frac{\partial^{2}n}{\partial
t^{2}}+\frac{\partial^{2}n}{\partial t\partial s}\right)  \\
=-\frac{1}{2J}\frac{\partial}{\partial t}\left(  \frac{\partial n}{\partial
s}-\frac{\partial n}{\partial t}\right)  =-\frac{1}{2J}\frac{\partial
J}{\partial t},
\end{gather*}
where we have used (\ref{inversion}) and (\ref{J}). Thus,%
\[
\frac{\partial G}{\partial t}=-\frac{1}{2J}\frac{\partial J}{\partial t}%
\]
and therefore%
\[
G(t,s)=-\frac{1}{2}\ln(J)+C(s)
\]
for some function $C(s).$ Since from (\ref{initial}) we have $G(0,s)=0,$ while
(\ref{J1}) gives $J(0,s)=\omega(s),$ we conclude that $C(s)=\frac{1}{2}%
\ln\left[  \omega(s)\right]  $ and hence
\begin{equation}
G(t,s)=\frac{1}{2}\ln\left(  \frac{\left[  f^{\prime}(s)\right]  ^{2}}{\left[
f^{\prime}(s)\right]  ^{2}+nf(s)f^{\prime\prime}(s)}\right)  .\label{g(n,s)}%
\end{equation}
Replacing (\ref{f(x,n,s)}) and (\ref{g(n,s)}) in (\ref{anszat}), we obtain
$g_{n}(x)\sim\kappa\Phi\left(  x,n;s\right)  $ as $n\rightarrow\infty,$ with
\[
\Phi\left(  x,n;s\right)  =\frac{f(s)}{f(x)}e^{-n}\left[  \frac{f^{\prime}%
(s)}{f(x)}\right]  ^{n}\sqrt{\frac{\left[  f^{\prime}(s)\right]  ^{2}}{\left[
f^{\prime}(s)\right]  ^{2}+nf(s)f^{\prime\prime}(s)}}%
\]
and $\kappa$ is still to be determined. Eliminating $t$ from (\ref{rays}) we
get%
\[
n-f^{\prime}(s)\left[  h(s)-h(x)\right]  =0,
\]
which defines the function $s(x,n)$ implicitly. In cases where there exist
multiple solutions $s_{1},s_{2},\ldots,$ we must add all the contributions.

We summarize the results of this section in the following theorem.

\begin{theorem}
\label{theorem}Let the functions $g_{n}(x)$ be defined by%
\[
g_{n+1}=g_{n}^{\prime}+\left(  n+1\right)  \omega(x)g_{n},
\]
with $g_{0}(x)=1$ and%
\[
\omega(x)=\frac{d}{dx}\ln\left[  f(x)\right]  .
\]
Then, we have%
\begin{equation}
g_{n}(x)\sim\kappa\sum_{j}\Phi\left[  x,n;s_{j}(x,n)\right]  ,\quad
n\rightarrow\infty\label{Pasympt}%
\end{equation}
where
\begin{equation}
\Phi\left(  x,n;s\right)  =\frac{f(s)}{f(x)}e^{-n}\left[  \frac{f^{\prime}%
(s)}{f(x)}\right]  ^{n}\sqrt{\frac{\left[  f^{\prime}(s)\right]  ^{2}}{\left[
f^{\prime}(s)\right]  ^{2}+nf(s)f^{\prime\prime}(s)}}, \label{Phi}%
\end{equation}
$\kappa$ is an overall constant to be determined by matching and $s_{j}(x,n)$
is a solution of the equation
\begin{equation}
n-f^{\prime}(s)\left[  h(s)-h(x)\right]  =0. \label{EqS}%
\end{equation}

\end{theorem}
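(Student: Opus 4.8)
The plan is to treat the differential-difference equation (\ref{ddnested}) as a discrete analogue of a first-order partial differential equation and to attack it with a WKB-type ray ansatz. First I would posit the exponential form (\ref{anszat}), namely $G_{n}(x)\sim\kappa\exp\left[F(x,n)+G(x,n)\right]$ with $G=o(F)$, and record the initial data (\ref{initial}) forced by $g_{0}\equiv1$. Substituting this ansatz into (\ref{ddnested}) and Taylor-expanding $F(x,n+1)$ and $G(x,n+1)$ about the now-continuous variable $n$ produces, after cancelling the common factor $\exp(F+G)$, a hierarchy of equations in descending orders of magnitude; see (\ref{asymp1}). Matching the dominant balance yields the \emph{eikonal} equation (\ref{eikonal}), and the first correction yields the \emph{transport} equation (\ref{transport}).

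The heart of the argument is solving the eikonal equation, which I would do by the method of characteristics with $\mathfrak{F}$ as in (\ref{eikonal1}). I would write down the characteristic system (\ref{charac1})--(\ref{eqf}) and integrate it outward from the initial manifold $x(0,s)=s$, $n(0,s)=0$. The compatibility conditions (\ref{initial1})--(\ref{initial2}), together with the requirement $F(0,s)=0$ coming from (\ref{f(0,s)}), pin down the free functions $A(s)$ and $B(s)$, giving $A(s)=\ln\omega(s)$ and the explicit rays (\ref{rays})--(\ref{pq}). Integrating (\ref{eqf}) along the characteristics then yields $F$ in the form (\ref{f(t,s)}), which in the original variables becomes (\ref{f(x,n,s)}).

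For the transport equation I would convert the $x$- and $n$-derivatives of $G$ into $t$-derivatives using the chain rule and the matrix inversion (\ref{inversion}), together with the Jacobian $J(t,s)$ of (\ref{J}), evaluated explicitly in (\ref{J1}). The key simplification is that the right-hand side collapses to $-\frac{1}{2J}\,\partial_{t}J$, so that $G=-\frac{1}{2}\ln J+C(s)$; the boundary value $G(0,s)=0$ together with $J(0,s)=\omega(s)$ fixes $C(s)$ and produces (\ref{g(n,s)}). Assembling $F$ and $G$ in (\ref{anszat}) gives the amplitude $\Phi$ of (\ref{Phi}). Finally, eliminating the parameter $t$ from the ray equations (\ref{rays}) yields the implicit relation (\ref{EqS}) defining $s(x,n)$, and summing over all admissible branches $s_{j}$ gives (\ref{Pasympt}).

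The main obstacle is conceptual rather than computational: the ray method is a formal construction, so the result is an asymptotic approximation rather than an identity, and the overall constant $\kappa$ is \emph{not} determined by the local analysis---it must be fixed separately by matching against an independent evaluation, such as the large-$x$ behavior (\ref{xlarge}) or a tractable special case. A secondary difficulty is that (\ref{EqS}) may admit several roots $s_{j}$; at points where these branches coalesce the expansion (\ref{Phi}) breaks down, since the square-root amplitude becomes singular when $J=0$, signaling caustics that would demand a uniform treatment rather than a simple superposition. Away from such points, superposing the single-ray contributions as in (\ref{Pasympt}) is justified.
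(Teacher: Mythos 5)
Your proposal follows exactly the paper's derivation: the exponential ansatz, the eikonal and transport equations, the method of characteristics with $A(s)=\ln\omega(s)$, the Jacobian computation giving $G=-\tfrac{1}{2}\ln J+C(s)$, and the elimination of $t$ to obtain the implicit equation for $s$. Your remarks on the formal nature of the result, the matching needed to fix $\kappa$, and the caustic issue are consistent with how the paper treats these points, so there is nothing to add.
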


\section{Examples}

\begin{enumerate}
\item The natural logarithm.

Let $h(x)=\ln(x+1).$ Then,
\begin{equation}
f(x)=\frac{1}{h^{\prime}(x)}=x+1 \label{fex1}%
\end{equation}
and
\[
\omega(x)=\left(  x+1\right)  ^{-1}.
\]
In this case, (\ref{ddnested}) takes the form
\begin{equation}
g_{n+1}=g_{n}^{\prime}+\frac{n+1}{x+1}g_{n},\quad g_{0}=1. \label{gnex1}%
\end{equation}

Using (\ref{fex1}) in (\ref{Phi}), we have%
\[
\Phi\left(  x,n;s\right)  =\left(  s+1\right)  e^{-n}\left(  x+1\right)
^{-(n+1)},
\]
while (\ref{EqS}) gives%
\[
n-\ln\left(  \frac{s+1}{x+1}\right)  =0
\]
or%
\[
s=\left(  x+1\right)  e^{n}-1.
\]
Thus, from (\ref{Pasympt}), we obtain%
\[
g_{n}(x)\sim\kappa\left(  x+1\right)  ^{-n},\quad n\rightarrow\infty.
\]

Since
\[
\omega(x)=\left(  x+1\right)  ^{-1}\sim x^{-1},\quad x\rightarrow\infty,
\]
we know from (\ref{a=1}) that $g_{n}(x)\sim x^{-n}$ and therefore $\kappa=1.$
We conclude that%
\[
g_{n}(x)\sim\left(  x+1\right)  ^{-n},\quad n\rightarrow\infty.
\]
But in fact, $g_{n}(x)=\left(  x+1\right)  ^{-n}$ is the exact solution of
(\ref{gnex1})!

\item The arctangent.

Let $h(x)=\arctan(x).$ Then,
\begin{equation}
f(x)=x^{2}+1 \label{ftan}%
\end{equation}
and
\[
\omega(x)=\frac{2x}{x^{2}+1}.
\]
In this case, (\ref{ddnested}) takes the form
\[
g_{n+1}=g_{n}^{\prime}+\left(  n+1\right)  \frac{2x}{x^{2}+1}g_{n},\quad
g_{0}=1.
\]

Using (\ref{ftan}) in (\ref{Phi}), we have%
\begin{equation}
\Phi\left(  x,n;s\right)  =\frac{s^{2}+1}{x^{2}+1}e^{-n}\left[  \frac
{2s}{x^{2}+1}\right]  ^{n}\sqrt{\frac{2s^{2}}{2s^{2}+n\left(  s^{2}+1\right)
}}, \label{Phi1}%
\end{equation}
while (\ref{EqS}) gives%
\begin{equation}
n-2s\left[  \arctan(s)-\arctan(x)\right]  =0. \label{Eqs1}%
\end{equation}
For every point $\left(  x,n\right)  $ there exist two solutions $s_{-}<0$ and
$s_{+}>0$ of (\ref{Eqs1}) (see Figure 1). Hence, we get from (\ref{Pasympt})
\[
g_{n}(x)\sim\kappa\left[  \Phi\left(  x,n;s_{-}\right)  +\Phi\left(
x,n;s_{+}\right)  \right]  ,\quad n\rightarrow\infty.
\]

\begin{figure}[ptb]
\begin{center}
\includegraphics{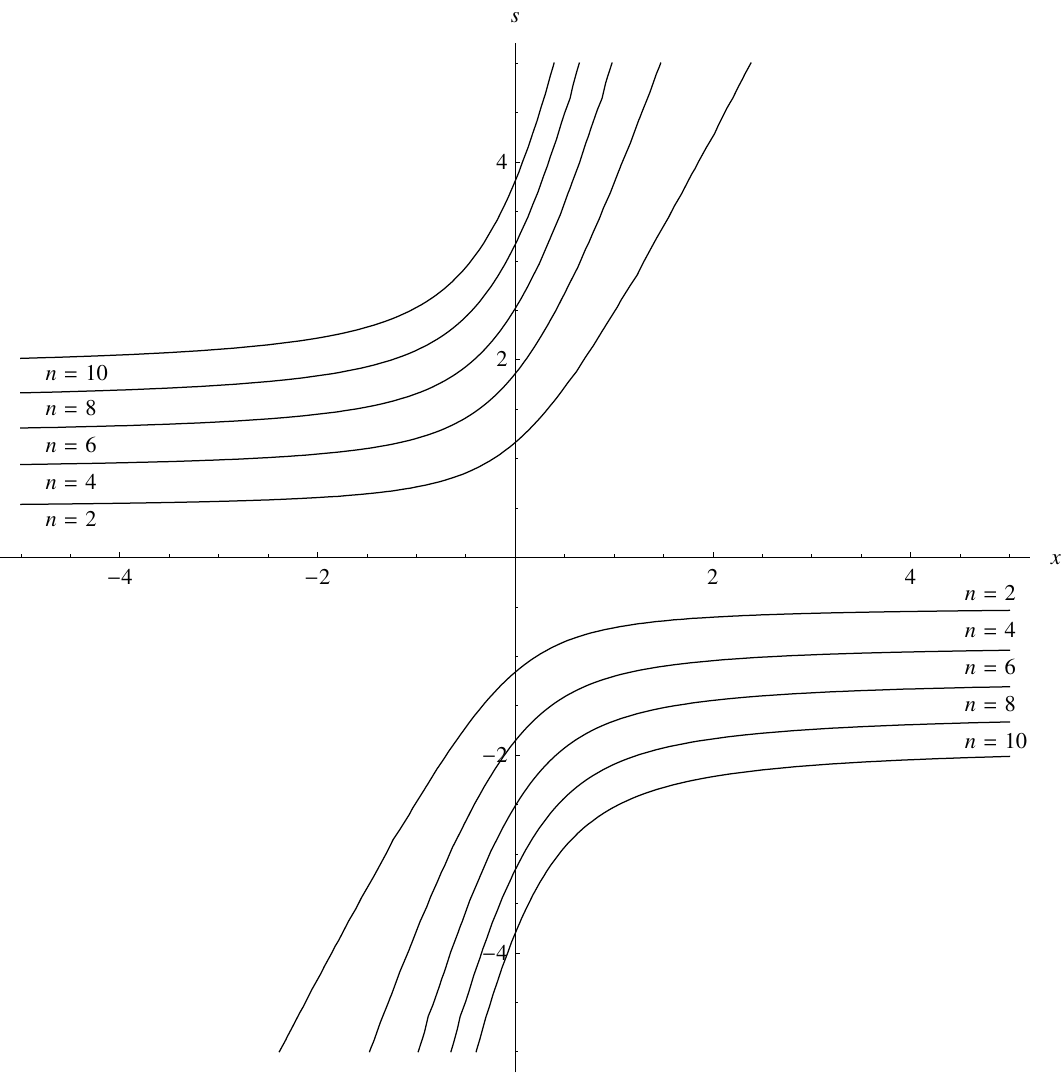}
\end{center}
\caption{A sketch of $s_{-}(x, n)$ and $s_{+}(x, n)$ for various values of
$n$. }%
\label{Fig1}%
\end{figure}

If we fix $n=5$ and let $x\rightarrow\infty,$ we get from (\ref{Eqs1})%
\[%
\begin{tabular}
[c]{|l|l|}\hline
$x$ & $s$\\\hline
$10$ & $-1.0877,\ 35.114$\\\hline
$20$ & $-1.0686,\ 70.057$\\\hline
$50$ & $-1.0575,\ 175.02$\\\hline
$100$ & $-1.0538,\ 350.01$\\\hline
\end{tabular}
\ \
\]
It follows that one solution approaches a fixed negative value $s_{-}$ and the
other $s_{+}$ grows algebraically. After some calculations, we find that%
\begin{equation}
s_{+}=\left(  1+\frac{n}{2}\right)  x+O\left(  x^{-1}\right)  ,\quad
x\rightarrow\infty. \label{sxlarge}%
\end{equation}
Using (\ref{sxlarge}) in (\ref{Phi1}) we obtain, to leading order,%
\begin{equation}
\Phi\left(  x,n;s_{+}\right)  \sim\kappa2^{-\frac{3}{2}}\left(  n+2\right)
^{n+\frac{3}{2}}e^{-n}x^{-n},\quad x\rightarrow\infty. \label{Phi1xlarge}%
\end{equation}

But since%
\[
\omega(x)=\frac{2x}{x^{2}+1}\sim2x^{-1},\quad x\rightarrow\infty
\]
we have from (\ref{xlarge})%
\begin{equation}
g_{n}\left(  x\right)  \sim\left(  n+1\right)  !x^{-n},\quad x\rightarrow
\infty. \label{g1xlarge}%
\end{equation}
Matching (\ref{Phi1xlarge}) with (\ref{g1xlarge}), we get%
\[
\kappa=2^{\frac{3}{2}}\left(  n+2\right)  ^{-n-\frac{3}{2}}e^{n}\left(
n+1\right)  !
\]
Note that%
\[
\kappa=4\sqrt{\pi}e^{-2}+O\left(  n^{-1}\right)  ,\quad n\rightarrow\infty.
\]
We conclude that%
\[
g_{n}(x)\sim4\sqrt{\pi}e^{-2}\left[  \Phi\left(  x,n;s_{-}\right)
+\Phi\left(  x,n;s_{+}\right)  \right]  ,\quad n\rightarrow\infty.
\]

If $x=0,$ (\ref{Eqs1}) becomes%
\begin{equation}
n-2s\arctan(s)=0, \label{x=0}%
\end{equation}
and solving for $s,$ we get%
\[%
\begin{tabular}
[c]{|l|l|}\hline
$n$ & $s$\\\hline
$5$ & $\pm2.1887$\\\hline
$10$ & $\pm3.8056$\\\hline
$20$ & $\pm6.9985$\\\hline
$50$ & $\pm16.551$\\\hline
$100$ & $\pm32.467$\\\hline
\end{tabular}
\ .\ \
\]
It follows that in this case $s_{-}=-s_{+},$ and therefore%
\begin{align*}
g_{n}(0)  &  \sim4\sqrt{\pi}e^{-2}\left[  \Phi\left(  0,n;s_{+}\right)
+\Phi\left(  0,n;-s_{+}\right)  \right] \\
&  =4\sqrt{\pi}\left[  1+\left(  -1\right)  ^{n}\right]  \left(  s_{+}%
^{2}+1\right)  e^{-\left(  n+2\right)  }\left(  2s_{+}\right)  ^{n}\sqrt
{\frac{2s_{+}^{2}}{2s_{+}^{2}+n\left(  s_{+}^{2}+1\right)  }}%
\end{align*}
as $n\rightarrow\infty.$ Since $g_{n}(0)=0$ for odd $n,$ we focus our
attention on even values of $n.$ From (\ref{x=0}) we obtain%
\[
s_{+}\sim\frac{n+2}{\pi},\quad n\rightarrow\infty,
\]
and hence,%
\[
g_{2n}(0)\sim\frac{4^{2\left(  n+1\right)  }}{\pi^{2n+\frac{3}{2}}%
}e^{-2\left(  n+1\right)  }\left(  n+1\right)  ^{2n+1}\frac{4\left(
n+1\right)  ^{2}+\pi^{2}}{\sqrt{4\left(  n+1\right)  ^{3}+\pi^{2}n}},\quad
n\rightarrow\infty.
\]

But since in this case $H(x)=\tan(x),$ we know that \cite{MR2031140}
\[
\mathfrak{D}^{2n}[x^{2}+1]\,(0)=\frac{2}{n+1}4^{n}\left(  4^{n+1}-1\right)
\left\vert B_{2\left(  n+1\right)  }\right\vert ,
\]
where $B_{n}$ are the Bernoulli numbers. It follows that%
\[
\left\vert B_{2\left(  n+1\right)  }\right\vert \sim2\frac{4^{n+1}}%
{\pi^{2n+\frac{3}{2}}\left(  4^{n+1}-1\right)  }\left(  \frac{n+1}{e}\right)
^{2\left(  n+1\right)  }\frac{4\left(  n+1\right)  ^{2}+\pi^{2}}%
{\sqrt{4\left(  n+1\right)  ^{3}+\pi^{2}n}},
\]
as $n\rightarrow\infty,$ or%
\[
\left\vert B_{2n}\right\vert \sim2\frac{4^{n}}{\pi^{2n-\frac{1}{2}}\left(
4^{n}-1\right)  }\left(  \frac{n}{e}\right)  ^{2n}\frac{4n^{2}+\pi^{2}}%
{\sqrt{4n^{3}+\pi^{2}\left(  n-1\right)  }},\quad n\rightarrow\infty.
\]
To leading order in $n,$ we obtain the well known asymptotic approximation%
\[
\left\vert B_{2n}\right\vert \sim4\sqrt{n\pi}\left(  \frac{n}{e\pi}\right)
^{2n},\quad n\rightarrow\infty.
\]

\end{enumerate}

\begin{acknowledgement}
This work was partially supported by a Provost Research Award from SUNY New Paltz.
\end{acknowledgement}

\newif\ifabfull\abfullfalse
\input apreambl

\end{document}